\newtheorem{theorem}{Theorem}
\newtheorem{proof}{Proof}
\newtheorem{corollary}{Corollary}
\newtheorem{remark}{Remark}
\newtheorem{example}{Example}
\newtheorem{assumption}{Assumption}
\newtheorem{definition}{Definition}
\newcommand{\R}{{\mathbb{R}}}
\newcommand{\T}{{\text{T}}}
\DeclareMathOperator{\Real}{Re}
\DeclareMathOperator{\Tr}{\mathrm{Tr}}
\DeclareMathOperator{\diag}{\mathrm{diag}}
\newcommand{\G}{{\mathcal{G}}}
\newcommand{\V}{{\mathcal{V}}}
\newcommand{\EE}{{\mathcal{E}}}
\DeclareMathOperator{\CT}{\text{H}}
\begin{document}
\title{\LARGE \bf Stability and Robustness Analysis of\\ Commensurate Fractional-order Networks}

\author{ Milad Siami
\thanks{M. Siami is with Electrical \& Computer Engineering Department, Northeastern University, Boston, MA 02115 USA (e-mail:  {\tt\small m.siami@northeastern.edu}).}
}

\maketitle

\begin{abstract}
Motivated by biochemical reaction networks, a generalization of the classical secant condition for the stability analysis of cyclic interconnected commensurate fractional-order systems is provided. The main result presents a sufficient condition for stability of networks of cyclic interconnection of fractional-order systems when the digraph describing the network conforms to a single circuit. {The condition becomes necessary under a special situation where coupling weights are uniform.} We then investigate the robustness of fractional-order linear networks. Robustness performance of a fractional-order linear network is quantified using the $\mathcal H_2$-norm of the dynamical system. Finally, the theoretical results are confirmed via some numerical illustrations.\\
\end{abstract}


\section{Introduction}

Due to growing interest in design, analysis, and control of complex dynamical networks, it is important to be able to describe and model these systems accurately \cite{magin2019fractional}. Derivation from first principles and Gaussian statistical approaches leads to many of the canonical models and methods ubiquitous in physics, engineering, and even the biological sciences. However, there remain many complex systems that have eluded quantitative analytic descriptions (e.g., non-Brownian diffusion in complex dynamical networks; description of anomalous, long-range, and non-local properties). For example, soft/flexible robots are entailed by the elastic property of porous media and long-term memory; these properties cannot be fully captured by ordinary differential equations \cite{rahmani2016, rus2015}. Moreover, viscoelastic properties are typical for a wide variety of biological networks \cite{efremov2020measuring,lieleg2009cytoskeletal}. Fractional-order systems, widely regarded as an extension of integer-order systems, are those dynamical systems whose state space representations involve non-integer derivatives of the states. In \cite{SiamiFrac2013,SiamiFrac2008}, we investigated important problems regarding fractional-order systems, which are their stability analysis, oscillation analysis, and chaos control. One of the significant potentials of fractional calculus has been considered for the description of anomalous, non-Brownian diffusion in complex dynamical networks. 

For different applications of fractional-order systems, comprehensive understanding of the properties of such systems is of great importance. Despite this importance, until now only a few studies have been done in order to investigate the properties of fractional-order models (cf. \cite{rahmani2016,magin2019fractional,SiamiFrac2008}). Moreover, the class of cyclic networks has been investigated heavily in the context of systems biology \cite{Betz65, ChandraBD11, arcaksontag06, Selkov75}.  Cyclic feedback structures have classically been used to model autoregulatory feedback loops in gene networks, metabolic pathways, and cell signaling \cite{ChandraBD11,tyson1978dynamics}.  
The classical secant condition applies to first-order linear time-invariant (FLTI) systems that are obtained as negative feedback cycles. 
This paper presents a generalization of the secant criterion that explicitly displays the range of parameters that guarantee the stability of cyclic interconnected scalar fractional-order systems. This approach is now throwing new light onto classical results in mathematical biology and suggesting new directions for further research in that field.

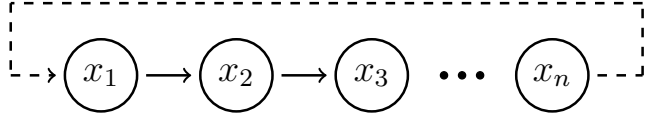
\begin{figure*}[t]
\centering
\scalebox{1.2}{
\begin{tikzpicture}
\draw [ thick] (-2.5,0) circle [radius=0.4];
\draw [ thick] (-1,0) circle [radius=0.4];
\draw [ thick] (.5,0) circle [radius=0.4];
\draw [  thick] (2.5,0) circle [radius=0.4];
\draw [->, thick] (-2,0) -- (-1.5,0);
\draw [->,  thick] (-.5,0) -- (0,0);
\draw [fill] (1.3,0) circle [radius=.04];
\draw [fill] (1.5,0) circle [radius=.04];
\draw [fill] (1.7,0) circle [radius=.04];
\draw [ thick, dashed] (3,0) -- (3.5,0);
\draw [ thick, dashed] (3.5,0) -- (3.5,.8);
\draw [ thick, dashed] (3.5,.8) -- (-3.5,.8);
\draw [ thick, dashed] (-3.5,.8) -- (-3.5,0);
\draw [ ->,thick, dashed] (-3.5,0) -- (-3,0);
\node[] at (-2.5,0) {$x_1$};
\node[] at (-1,0) {$x_2$};
\node[] at (.5,0) {$x_3$};
\node[] at (2.5,0) {$x_n$};
\end{tikzpicture}}
  	\caption{{\small Schematic diagram of negative feedback noisy cyclic system described by \eqref{eq66}. The dashed link indicates a negative (inhibitory) feedback signal.}}
  	\label{fig_22}
\end{figure*}

In this paper, we first develop some basic principles to study {commensurate} fractional-order systems characterized by a cyclic interconnection structure as depicted in Figure \ref{fig_22}. We focus on the Caputo definition of fractional-order systems \cite{SiamiFrac2008, SiamiFrac2010, SiamiFrac2013}. We develop basic principles to study the stability of fractional-order systems characterized by spatially invariant and cyclic interconnection structures. We then gain insights from the combination of network science and fractional-order calculus for analyzing the robustness of networks of interconnected commensurate FLTI subsystems.


A framework for robustness analysis of commensurate FLTI networks under the influence of external anomalous disturbances is developed in Section \ref{Sec:5}. The squared $\mathcal H_2$-norm of the output of an FLTI stable network is adopted as a performance measure to quantify performance deterioration of the network. This performance measure
is equal to the square of the $\mathcal H_2$-norm of the network from the disturbance input to the output \cite{SiamiTAC2016, SiamiAutomatica}. {
The (squared) $\mathcal H_2$ norm of these FLTI systems can be interpreted as the impulse response energy.
One of our contributions in this paper is to show how
the performance measure relates to the underlying graph of the commensurate FLTI network and we obtain a closed-form solution based on eigenvalues of the state matrix.

One of the central problems in control theory is determining to what extent uncertain exogenous inputs can steer the trajectories of a dynamical network away from its working equilibrium point. The primary challenge is to introduce meaningful and viable performance and robustness measures that can capture the network's essential characteristics. Our proposed spectral-based measure reveals the critical role and contribution of fast and slow dynamic modes of a CFLTI system in the best and worst achievable robustness performance under white noise excitation. Furthermore, in \cite{SiamiTAC18,siami2017growing,siami2017abstraction}, we investigate several useful functional properties of performance measures (e.g., convexity, monotonicity, homogeneity) that allow us to utilize them in network synthesis problems.}

\section{Notations, Definitions, and Basic Concepts}

\allowdisplaybreaks

{\subsection{Spectral Graph Theory}

We first review the basic concepts. An undirected graph herein is defined by a triple $\G = (\V, \EE,w)$, where $\V$ is the set of nodes, $\EE \subseteq \big\{\{i,j\}~\big|~ i,j \in \V, ~i \neq j \big\}$ is the set of links, and $w: \EE \rightarrow  \R_{++}$ is the weight function. 
{The adjacency matrix $A = [a_{ij}]$ of graph $\G$ is defined in such a way that $a_{ij} = w(e)$ if $e=\{i,j\} \in \EE$, and $a_{ij}=0$ otherwise. The Laplacian matrix of $\G$ is defined by $L := \Delta - A$, where $\Delta=\diag[d_{1},\ldots,d_{n}]$ and $d_i$ is degree of node $i$.}  We denote the set of Laplacian matrices of all connected weighted graphs with $n$ nodes by $L$. Since $\G$ is both undirected and connected, the Laplacian matrix $L$ has $n-1$ strictly positive eigenvalues and one zero eigenvalue. We Assume that $0 = \lambda_1 < \lambda_2 \leq \ldots \leq \lambda_n$ are eigenvalues of Laplacian matrix $L$. Eigenvalues of $n$-by-$n$ matrix $A$ are shown by $\lambda_i(A)$ for $i=1,2,\cdots,n$.
}

\subsection{Fractional Calculus}
In this paper, we focus on the Caputo definition. This definition is given by
\begin{eqnarray}
&&\frac{d^{\alpha}}{d t^{\alpha}}f(t)=\frac{1}{\Gamma(n-\alpha)}\int_{0}^{t}(t-\tau)^{n-\alpha-1}\frac{d^n}{d t^n}f(\tau)d \tau,
\end{eqnarray}
where $\alpha$ is a positive real number, $\Gamma(.)$ is the Gamma function and $n$ is the first integer not less than $\alpha$ ({\it i.e.}, $\lceil \alpha \rceil = n$).
An FLTI system can be represented by the following pseudo state space form
\begin{equation}
\Sigma:
    \begin{cases}
	\frac{d^{\bar \alpha}}{d t^{\bar \alpha}}x(t) ~=~ A x(t) ~+~ B {\xi(t)}, \\
	y(t) ~=~ C x(t),
	\label{eq2}
	\end{cases}
\end{equation}
where $x \in \mathbb {R}^n$, $u \in \mathbb{R}^m$,  $y \in \mathbb{R}^p$, $A \in \mathbb{R}^{n \times n}$, $B \in \mathbb {R}^{n \times m}$, $C \in \mathbb{R}^{p \times n}$, and $ \frac {d^{\bar{\alpha}}}{d t^{\bar{\alpha}}}$ refers to the Caputo derivative where $\bar \alpha =[ \alpha_1 \cdots \alpha_n]^\top$  indicates fractional order settled in the range {$(0, 2)^n$}. The size of vector $x$,  $n$, is called the inner dimension of system (\ref{eq2}). If $\alpha = \alpha_1 = \cdots = \alpha_n$, system (\ref{eq2}) is called a commensurate order system. Also, if $\alpha_i$'s are rational numbers, system (\ref{eq2}) is called a rational order system. The stability of commensurate order systems is investigated in the following  theorem from \cite{matignon1996stability}.
\begin{theorem}
\label{th-1}
Consider the commensurate order system given by
\begin{eqnarray}
	\frac{d^{\alpha}}{d t^{\alpha}}x(t) ~=~ A x(t),
	\label{eq3}
\end{eqnarray}
where $x \in \mathbb{R}^ n$,  $A \in \mathbb{R}^{n \times n}$ and $\bar \alpha =[ \alpha \cdots \alpha]$. System (\ref{eq3}) is asymptotically stable if and only if $|\text{arg}(\lambda)|>\frac{\alpha \pi}{2}$ is satisfied for all eigenvalues $\lambda$ of matrix $A$. Also, this system is stable if and only if $|\text{arg}(\lambda)|\geq \frac{\alpha \pi}{2}$ is satisfied for all eigenvalues $\lambda$ of matrix $A$ and those critical eigenvalues satisfying conditions $|\text{arg}(\lambda)| = \frac{\alpha \pi}{2}$ have equal geometric and algebraic multiplicities. 
\end{theorem}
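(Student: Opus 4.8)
The plan is to solve \eqref{eq3} in closed form and then read off stability from the asymptotics of the solution. First I would observe that stability is invariant under a constant similarity transform: writing $A = PJP^{-1}$ with $J$ in Jordan form, the variable $z := P^{-1}x$ satisfies $\frac{d^{\alpha}}{dt^{\alpha}} z = J z$, because the Caputo derivative is linear and commutes with left-multiplication by the constant matrix $P^{-1}$. Since $P$ and $P^{-1}$ are bounded, $x(t)$ decays (resp.\ stays bounded) iff $z(t)$ does, and the eigenvalues together with their algebraic and geometric multiplicities are preserved. It therefore suffices to analyze a single Jordan block $J = \lambda I + N$.

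Next I would produce the explicit solution. Taking Laplace transforms and using $\mathcal{L}\{\tfrac{d^{\alpha}}{dt^{\alpha}} f\}(s) = s^{\alpha} F(s) - s^{\alpha-1} f(0)$ for $\alpha \in (0,1)$ (with one extra initial term when $\alpha \in (1,2)$) gives $X(s) = (s^{\alpha} I - J)^{-1} s^{\alpha-1} x(0)$. Inverting block-by-block yields, for a scalar eigenvalue, $x(t) = E_{\alpha}(\lambda t^{\alpha})\,x(0)$, and for an $m \times m$ Jordan block a finite combination of the functions $t^{j\alpha} E_{\alpha}^{(j)}(\lambda t^{\alpha})$, $j = 0,\dots,m-1$, where $E_{\alpha}$ is the Mittag-Leffler function and $E_{\alpha}^{(j)}$ its $j$-th derivative. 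This representation automatically selects the physically relevant (principal) Riemann sheet, so I can avoid tracking the branch cuts of $s \mapsto s^{\alpha}$ by hand.

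The theorem then reduces to the classical two-sided asymptotic expansion of $E_{\alpha}$: there is a critical sector of half-angle $\tfrac{\alpha\pi}{2}$ such that for $|\arg z| > \tfrac{\alpha\pi}{2}$ one has $E_{\alpha}(z) = -\tfrac{1}{\Gamma(1-\alpha)z} + O(|z|^{-2})$ (algebraic decay), whereas for $|\arg z| < \tfrac{\alpha\pi}{2}$ a term $\tfrac{1}{\alpha}\exp(z^{1/\alpha})$ dominates (exponential growth). Since $\arg(\lambda t^{\alpha}) = \arg(\lambda)$ and $|\lambda t^{\alpha}| \to \infty$ as $t \to \infty$, the sign of $|\arg(\lambda)| - \tfrac{\alpha\pi}{2}$ decides decay versus growth, which yields both directions of the asymptotic-stability claim at once. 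The delicate part, and the main obstacle, is the boundary case $|\arg(\lambda)| = \tfrac{\alpha\pi}{2}$: here $E_{\alpha}(\lambda t^{\alpha})$ stays bounded without decaying, so a \emph{simple} eigenvalue on the ray gives marginal stability, but the derivative terms $t^{j\alpha} E_{\alpha}^{(j)}(\lambda t^{\alpha})$ coming from a nontrivial Jordan block pick up algebraic-in-$t$ growth on exactly this ray. I would therefore make the boundary asymptotics of $E_{\alpha}^{(j)}$ precise to show that such growth occurs precisely when the block is nontrivial, which is exactly the requirement that critical eigenvalues have equal algebraic and geometric multiplicities. This boundary bookkeeping, rather than the generic sector argument, is where the real care is needed.
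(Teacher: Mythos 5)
The paper does not actually prove this theorem: it is imported verbatim from the cited reference \cite{matignon1996stability}, so there is no in-paper argument to compare against. Your outline is, in essence, the canonical proof of that cited result --- reduction to a single Jordan block by a constant similarity (valid because the Caputo derivative commutes with constant matrices), Laplace inversion to the matrix Mittag--Leffler solution $t^{j\alpha}E_\alpha^{(j)}(\lambda t^\alpha)$, and the sectorial asymptotics of $E_\alpha$ --- and it is sound, including the correct identification of the boundary ray $|\arg(\lambda)|=\frac{\alpha\pi}{2}$ as the place where a nontrivial Jordan block injects algebraic-in-$t$ growth, which is exactly what forces the equal-multiplicity hypothesis. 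Two details to tighten if you wrote this out in full: for $\alpha\in(1,2)$ the exponentially large part of $E_\alpha(z)$ is not a single exponential but a finite sum $\frac{1}{\alpha}\sum_{m}\exp\left(z^{1/\alpha}e^{2\pi i m/\alpha}\right)$ over the integers $m$ with $|\arg z+2\pi m|\le\frac{\alpha\pi}{2}$, so the dominant-growth bookkeeping carries an extra index; and the eigenvalue $\lambda=0$ has no argument, so it must be handled separately (it behaves like a critical eigenvalue and again requires a trivial Jordan block for mere boundedness).
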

%

\begin{definition}
\label{def:marg}
A stable commensurate order system defined by (\ref{eq2}) is called marginally stable if and only if
matrix $A$ has at least one eigenvalue satisfying condition $| \text{arg}(\lambda)| = \frac {\alpha \pi}{2}$. 
\end{definition}

\section{Stability Analysis of Cyclic Interconnected Fractional-Order Systems}
\label{sec:III}
In this section, we introduce a stability problem for networks of cyclic interconnection of fractional-order systems.
The class of cyclic feedback systems typically arises in a sequence of biochemical reactions wherein the end product is necessary to power and catalyze its own production while intermediate products activate subsequent reactions \cite{SiamiBio2020}.

Consider an FLTI system $\mathcal G_i$ represented by a state-space model of the form
\begin{eqnarray}
	\frac{d^{\alpha}}{d t^{\alpha}}x_i(t) = -a_i x_i(t) + u_i(t),\quad y_i(t) = c_ix_i(t),
	\label{eq5}
\end{eqnarray}
where $u_i(t)$, $y_i(t)$, and $x_i(t)$ denote its input, output, and state, respectively. 
Now, consider the following cyclic system as depicted in Figure \ref{fig_22}
\begin{eqnarray}
	\frac{d^{\alpha} x_1}{d t^{\alpha}} &=& -a_1 x_1 - y_n, \nonumber \\ 
	\frac{d^{\alpha} x_2}{d t^{\alpha}} &=& -a_2 x_2 + y_1,\nonumber \\
												& \vdots& \nonumber \\
	\frac{d^{\alpha} x_n}{d t^{\alpha}} &=& -a_n x_2 + y_{n-1},
	\label{eq66}
\end{eqnarray}
where $a_i$ and $c_i$ are strictly positive numbers.

Using (\ref{eq5}) and (\ref{eq66}), it follows that

\begin{eqnarray}
\frac{d^{\alpha} x}{d t^{\alpha}} &=& A x,
\label{network}
\end{eqnarray}
where $x \in \mathbb{R}^ n$, $\alpha \in (0,2)$ and
\begin{eqnarray}
	A = \left[
	\begin{array}{cccccc}
	 -a_1&0&\cdots&0&-c_n\\
	  c_1&-a_2&\cdots&0&0\\
	 \vdots &&\ddots &&\vdots&\\
 	  0&0&\cdots&-a_{n-1}&0\\
  	 0&0&\cdots&c_{n-1}&-a_n
	\end{array}
	\right].
	\label{eq6}
\end{eqnarray}
\subsection{Stability Results}
\label{main}
The following theorem presents a necessary and sufficient condition for stability of networks of cyclic interconnection of fractional-order systems when the digraph describing the network conforms to a single circuit. { We then use this result later in Subsection \ref{sec:IV} to investigate the robustness of these networks. }
\begin{theorem}
\label{th-main}
Considering the commensurate fractional-order cyclic network (\ref{network}) {with $\alpha \in (0,2)$}, let us define 
\begin{equation}
\mathfrak a := \sqrt[n]{a_1a_2\cdots a_n},
\end{equation}
and 
\begin{equation}
\mathfrak c := \sqrt[n]{c_1c_2\cdots c_n}.
\end{equation}
The cyclic network \eqref{network} is asymptotically stable if $\alpha \leq\frac{2}{n}$. Moreover, for $\alpha > \frac{2}{n}$ the system is stable if
\begin{eqnarray}
 \gamma :=	\frac{\mathfrak c}{\mathfrak a} < \frac{\sin(\frac{\alpha \pi}{2})}{\sin({\frac{\alpha \pi}{2}-\frac{\pi}{n}})}.
	\label{eq4}
\end{eqnarray}
Further, when the $a_i$'s are identical, condition (\ref{eq4}) is also necessary for stability.
\end{theorem}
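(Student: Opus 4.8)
The plan is to reduce everything to the eigenvalue locations of $A$ via Theorem~\ref{th-1}, and to control those through the characteristic polynomial. First I would compute $\det(sI-A)$ for the cyclic matrix \eqref{eq6} by cofactor expansion along the first row; since $A$ has nonzero entries only on the diagonal, the first subdiagonal, and the $(1,n)$ corner, the expansion collapses to
\begin{equation}
\det(sI-A) \;=\; \prod_{i=1}^{n}(s+a_i) \;+\; \prod_{i=1}^{n}c_i .
\end{equation}
Thus $\lambda$ is an eigenvalue iff $\prod_{i=1}^n(\lambda+a_i) = -\mathfrak c^{\,n}$, which splits into a magnitude condition $\prod_i|\lambda+a_i| = \mathfrak c^{\,n}$ and a phase condition $\sum_i \arg(\lambda+a_i) \equiv \pi \pmod{2\pi}$. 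By Theorem~\ref{th-1} the network is asymptotically stable exactly when every such $\lambda$ satisfies $|\arg\lambda|>\alpha\pi/2$, so it suffices to prevent any root from reaching the ray $\arg\lambda=\psi:=\alpha\pi/2$ (the lower ray being covered by conjugate symmetry, as $A$ is real). For the regime $\alpha\le 2/n$ this is immediate: if $0\le\arg\lambda\le\psi$, then adding the positive number $a_i$ shifts $\lambda$ rightward and strictly lowers its argument, so $0\le\arg(\lambda+a_i)<\psi$ for each $i$; summing gives $0\le\sum_i\arg(\lambda+a_i)<n\psi\le\pi$, which can never be $\equiv\pi\pmod{2\pi}$, so no eigenvalue lies in the closed right sector.

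The heart of the theorem is the regime $\alpha>2/n$. Here I would examine a hypothetical boundary root $\lambda=re^{i\psi}$ and set $\beta_i:=\arg(\lambda+a_i)$. The triangle with vertices $0$, $a_i$, $\lambda+a_i$ has interior angles $\beta_i$ at the origin, $\pi-\psi$ at $a_i$, and $\psi-\beta_i$ opposite the side of length $a_i$, so the law of sines gives the key identity
\begin{equation}
\frac{|\lambda+a_i|}{a_i} \;=\; \frac{\sin\psi}{\sin(\psi-\beta_i)} .
\end{equation}
Multiplying over $i$ and using $\prod_i|\lambda+a_i|=\mathfrak c^{\,n}$ and $\prod_i a_i=\mathfrak a^{\,n}$ yields
\begin{equation}
\gamma^{\,n} \;=\; \frac{\sin^{n}\psi}{\prod_{i=1}^{n}\sin(\psi-\beta_i)},\qquad \sum_{i=1}^{n}\beta_i=\pi .
\end{equation}

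Because $\theta\mapsto\log\sin\theta$ is strictly concave on $(0,\pi)$, Jensen's inequality shows that $\prod_i\sin(\psi-\beta_i)$, subject to $\sum_i(\psi-\beta_i)=n\psi-\pi$, is maximized when all $\beta_i$ equal $\pi/n$, giving $\prod_i\sin(\psi-\beta_i)\le\sin^n(\psi-\pi/n)$. Hence a boundary root forces $\gamma\ge\sin\psi/\sin(\psi-\pi/n)$, so the strict inequality \eqref{eq4} excludes boundary roots; combined with a homotopy in $\gamma$ (at $\gamma=0$ the eigenvalues are $-a_i$, which lie in the open stable sector since $\alpha<2$, and eigenvalues depend continuously on $\gamma$, so none can cross a boundary that is never attained), this gives asymptotic stability. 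For necessity under identical $a_i=a$, the equation reduces to $(\lambda+a)^n=-\mathfrak c^{\,n}$ with roots $\lambda_k=-a+\mathfrak c\,e^{i(2k+1)\pi/n}$, and the root with $\arg(\lambda_k+a)=\pi/n$ reaches $\arg\lambda_k=\psi$ exactly when \eqref{eq4} holds with equality, so the bound is tight.

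I expect the main obstacle to be the phase bookkeeping in the sufficiency step. The phase condition is only $\sum_i\arg(\lambda+a_i)\equiv\pi\pmod{2\pi}$, so one must justify that the first crossing of the boundary occurs on the branch $\sum_i\beta_i=\pi$ rather than $3\pi,5\pi,\dots$ — equivalently, that the root realizing $\beta_i=\pi/n$ is the binding one. This is automatic when $\psi-\pi/n\le\pi/2$ (in particular for $\alpha\le 1$), where $\sin$ is increasing and the $\pi/n$ branch dominates; establishing this carefully, together with making the continuity/homotopy argument rigorous (continuity of roots in the coefficients and openness of the stable sector for $\alpha<2$), is where the real work lies.
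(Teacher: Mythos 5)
Your proposal follows the same route as the paper's proof: factor the characteristic polynomial as $\prod_i(\lambda+a_i)+\prod_i c_i$, test the boundary ray $\arg\lambda=\alpha\pi/2$ via Theorem~\ref{th-1}, apply the law of sines to the triangle with vertices $0$, $-a_i$, $\lambda$ to get $|\lambda+a_i|/a_i=\sin\psi/\sin(\psi-\beta_i)$, and show the constrained extremum over the angles occurs at the symmetric point $\beta_i=\pi/n$, which yields exactly the secant threshold; the necessity claim via the explicit roots $-a+\mathfrak{c}\,e^{i(2k+1)\pi/n}$ is also the paper's first step. Where you differ is in rigor, and in each instance your version is the stronger one: the paper justifies the extremal step only by saying the $\theta_i$ ``enter symmetrically,'' which does not by itself locate the extremum, whereas you invoke strict concavity of $\log\sin$ on $(0,\pi)$ and Jensen (valid here since each $\psi-\beta_i\in(0,\psi)$); the paper asserts the $\alpha\le 2/n$ case without proof, whereas you give the phase-sum argument $\sum_i\arg(\lambda+a_i)<n\psi\le\pi$; and you add the continuity-in-$\gamma$ homotopy needed to pass from ``no roots on the boundary ray'' to ``no roots in the closed sector,'' which the paper leaves implicit. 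The one genuine loose end you flag --- that the phase condition is only $\sum_i\beta_i\equiv\pi\pmod{2\pi}$, so the branches $3\pi,5\pi,\dots$ must be ruled out when $\psi-\pi/n>\pi/2$, i.e.\ $\alpha>1+2/n$ --- is a real issue, but it is equally unaddressed in the paper's own proof, and your observation that it is automatic for $\alpha\le 1$ (where $\sin$ is increasing on the relevant range, so the $\pi$-branch gives the smallest threshold) correctly isolates the only regime where additional work remains.
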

\begin{proof}
First, assume that 
\[a_1~=~a_2=~\cdots~=~a_n~=~a.\]
The characteristic equation of $A$ is obtained as follows
\[(\lambda+a)^n+c_1c_2 \cdots c_n~=~0.\]
Hence, $A$ has the following eigenvalues 
\begin{eqnarray}
\lambda_k&=&-a +  {\mathfrak c}\,{\rm e}^{j(\frac{\pi}{n}+\frac{2\pi k}{n})},
\end{eqnarray}
where $ k=0,1,\cdots,n-1$ and $\mathfrak c =\sqrt[n]{c_1c_2 \cdots c_n}$.
As can be seen in Figure \ref{fig:block}, system (\ref{network}) is asymptotically stable if and only if $|\text{arg}(\lambda)| > \frac{\alpha \pi}{2}$ is satisfied for all eigenvalues $\lambda$ of matrix $A$ (cf. Theorem \ref{th-1}). Now by writing the law of sines for the yellow triangle in Figure \ref{fig:block}, we get that {if $\frac{\alpha \pi}{2}> \frac{\pi}{n}$}

\begin{eqnarray}
	&&\frac{\mathfrak c}{\sin({\pi-\frac{\alpha \pi}{2}})} < \frac{a}{\sin({\frac{\alpha \pi}{2}-\frac{\pi}{n}})}.
	\label{eq444}
\end{eqnarray}

\begin{figure}[h]
\centering
\scalebox{1}{
\begin{tikzpicture}
\fill[step=.10cm, pattern= north west lines] (0,0) -- (3,3) --(4,3)--(4,-3)--(3,-3); 
\path [fill=yellow] (-2,0) -- (0,0) --(2,2);
\draw [<->] (-4,0) -- (4,0);
\draw [<->] (0,3) -- (0,-3);
\draw [thick] (0,0) -- (3,3);
\draw [thick] (0,0) -- (3,-3);
\draw [thick, dashed] (-2,0) -- (1,1.5) -- (2,2);
\draw [thick, red] (1,1.5)--(2,2);
\draw [fill] (1,1.5) circle [radius=.1];
\draw [thick] (1,0) arc [radius=1, start angle=0, end angle= 45];
\draw [thick] (-1,0) arc [radius=1, start angle=0, end angle= 25];
\node [] at (3,1) {Unstable};
\node [] at (3,-1) {Unstable};
\node [above right] at (1,.1) {$\frac{\alpha \pi}{2}$};
\node [above right] at (-1,0) {$\frac{\pi}{n}$};
\node [below left] at (-2,0) {$-a$};
\node [] at (-.5,1) {$r$};
\draw [fill] (1,-1.5) circle [radius=.1];
\draw [thick, dashed] (-2,0) -- (1,-1.5);
\draw [thick, dashed] (-2,0) -- (-1.5,2.9);
\draw [fill] (-1.5,2.9) circle [radius=.1];
\draw [thick] (-1.3,.4) arc [radius=.7, start angle=28, end angle= 85];
\node [above right] at (-1.7,.6) {$\frac{2\pi}{n}$};
\draw [fill] (-1.5,-2.9) circle [radius=.1];
\draw [thick, dashed] (-2,0) -- (-1.5,-2.9);
\node[] at (-2.5,1) {$\cdots$};
\node[] at (-2.5,-1) {$\cdots$};
\end{tikzpicture}}
\caption{Stable and unstable regions for system \eqref{network} when the $a_i$'s are identical. Black circles show the poles of the system in the complex plane. The dashed area shows the unstable region in the complex plane.}
\label{fig:block} 
\end{figure}
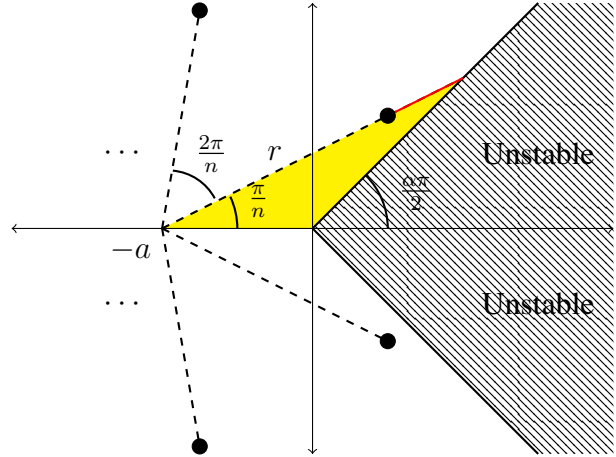

Hence, using (\ref{eq444}), we obtain (\ref{eq4}).   

In the other case, it is easy to see that, if (\ref{network}) is asymptotically stable, then
\begin{eqnarray}
P(\lambda)~=~\prod_{i=1}^{n}(\lambda+a_i)+c_1c_2 \cdots c_n~\ne~0,
\end{eqnarray}
for $|\text{arg}(\lambda)| \leq \frac{\alpha \pi}{2}$.
The asymptotic stability criterion reduces to
\begin{eqnarray}
&&0 ~\ne~ \prod_{i=1}^{n}(\omega {\rm e}^{j\frac{\alpha \pi}{2}}+a_i)+c_1c_2 \cdots c_n, 
\label{eq:200}
\end{eqnarray}
where $\omega \geq 0$.
Using \eqref{eq:200} and the law of sines in Figure \ref{fig:block2}, we get
\begin{eqnarray}
	&&\sum_{i=1}^{n}\theta_i~=~n\frac{\alpha \pi}{2}-\pi,\\
	&&\prod_{i=1}^{n}a_i\frac{\sin(\frac{\alpha \pi}{2})}{\sin \theta_i}~=~(a_1\cdots a_n)\prod_{i=1}^{n}\frac{\sin(\frac{\alpha \pi}{2})}{\sin \theta_i}> \mathfrak c.
	\label{eq44}
\end{eqnarray}
Clearly all $\theta_i$'s enter symmetrically in (\ref{eq44}) and therefore the minimum occurs when $\theta_1=\cdots=\theta_n$ (see Figure \ref{fig:block2}). {Therefore, it is asymptotically stable as prescribed in (\ref{eq4}).}

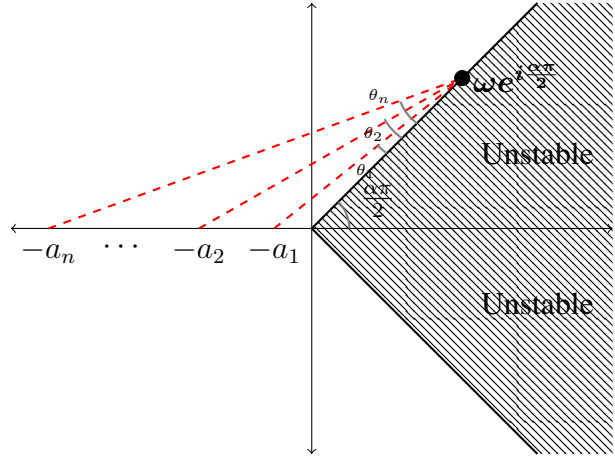
\begin{figure}[h]
\centering
\scalebox{1}{
\begin{tikzpicture}
\fill[step=.10cm, pattern= north west lines] (0,0) -- (3,3) --(4,3)--(4,-3)--(3,-3); 
\draw [<->] (-4,0) -- (4,0);
\draw [<->] (0,3) -- (0,-3);
\draw [thick] (0,0) -- (3,3);
\draw [thick] (0,0) -- (3,-3);
\draw [thick, dashed, red] (-1.5,0) -- (2,2);
\draw [thick, dashed, red] (-3.5,0) -- (2,2);
\draw [thick, dashed, red] (-.5,0) -- (2,2);
\draw [thick, gray] (.5,0) arc [radius=.5, start angle=0, end angle= 45];
\draw [thick,gray] (1,1) arc [radius=1, start angle=235, end angle= 225];
\draw [thick,gray] (1.2,1.2) arc [radius=.8, start angle=235, end angle= 210];
\draw [thick,gray] (1.4,1.4) arc [radius=.6, start angle=235, end angle= 200];
\node [] at (3,1) {Unstable};
\node [] at (3,-1) {Unstable};
\node [above right] at (.5,0) {$\frac{\alpha \pi}{2}$};
\node [right] at (2,2) {$\boldsymbol{\omega e^{i\frac{\alpha \pi}{2}}}$};
\node [below left] at (1,1) {\tiny{$\theta_1$}};
\node [below left] at (1.1,1.5) {\tiny $\theta_2$};
\node [below left] at (1.2,2) {\tiny $\theta_n$};
\node [below] at (-.5,0) {$-a_1$};
\node [below] at (-1.5,0) {$-a_2$};
\node [below] at (-2.5,0) {$\cdots$};
\node [below] at (-3.5,0) {$-a_n$};
\draw [fill] (2,2) circle [radius=.1];
\end{tikzpicture}}
\caption{Stable and unstable regions for system (\ref{network}). Angles $\theta_1, \cdots, \theta_n$ are depicted in this plot. }
\label{fig:block2} 
\end{figure}

\end{proof}

\begin{remark}
Assume that $\alpha=1$. Then the proposed stability test \eqref{eq4} recovers the secant criterion that was derived for cyclic LTI systems \cite{sontag2006}.
\end{remark}

\begin{remark}
 When $a_1=\cdots=a_n$, the system (\ref{network}) is marginally stable if and only if 
 \begin{eqnarray}
	\gamma = \frac{\mathfrak c}{\mathfrak a} =\sqrt[n]{\frac{c_1 \cdots c_n}{a_1 \cdots a_n}} = \frac{\sin(\frac{\alpha \pi}{2})}{\sin({\frac{\alpha \pi}{2}-\frac{\pi}{n}})},
\end{eqnarray}
with exactly one pair of eigenvalues on the boundary line $|\text{arg}(\lambda)| = \frac{\alpha \pi}{2}$.
 \end{remark}

\begin{remark}[Role of number of interconnections]
Figure \ref{fig:my_label0} depicts the range of parameter $\gamma$ that guarantees stability for any given {$\alpha \in (0,1]$} and $n \in \{5,10, 20\}$.
In Figure \ref{fig:my_label0}, the curves show the upper bound on parameter $\gamma$ ({\it i.e.}, the right-hand side of \eqref{eq4}) for any given {$\alpha \in (0,1]$} and $n \in \{5,10, 20\}$. {This plot also {shows} the dependence of network stability on network size. As $n$ increases, the region for the parameter $\gamma$ that guarantees stability shrinks. For example, for $n=5$ this region is the union of the left-hand side of dashed asymptote $y=0.4$ and the area under the blue curve (light blue area). However, for $n=10$, this region reduces to the union of the left-hand side of dashed asymptote $y=0.2$ and the area under the orange curve (blue area). We should note that for $\alpha \leq \frac{2}{n}$ (see the red dashed asymptotes for $n=5,10$ and $20$) the system \eqref{network} is stable, independent of values of $\gamma$.}
\end{remark}

\begin{figure*}
    \centering
\scalebox{0.4}{
    \includegraphics[trim={0cm 4cm 0 4cm},clip]{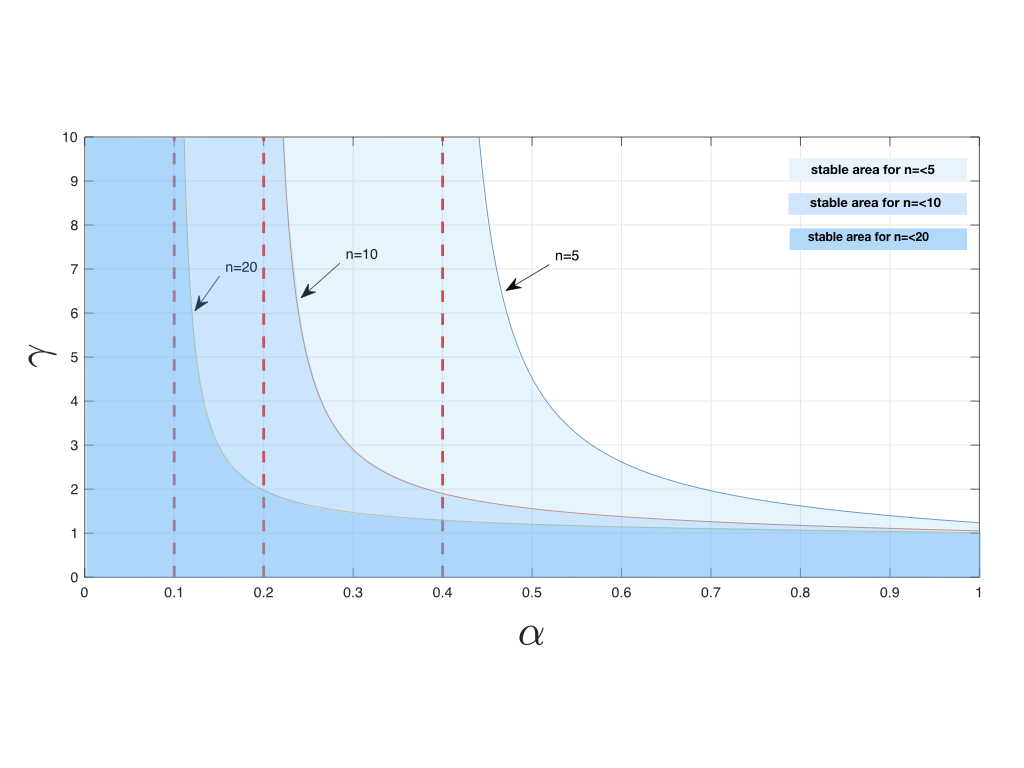}}
    \caption{This plot depicts the upper bound on parameter $\gamma$ (i.e., the right-hand side of \eqref{eq4}) versus parameter $\alpha$ for $n \in \{5,10,20\}$. {This plot also {illustrates} the dependence of network stability on network size. We should note that for $\alpha \leq \frac{2}{n}$ (see the red dashed asymptotes for $n=5,10$ and $20$) the system \eqref{network} is stable, independent of values of $\gamma$. As $n$ increases, the region for the parameter $\gamma$ that guarantees stability shrinks. For example, $n=5$ this region is the union of the left-hand side of dashed asymptote $y=0.4$ and the area under the blue curve (light blue area). However, for $n=10$, this region reduces to the union of the left-hand side of dashed asymptote $y=0.2$ and the area under the orange curve (blue area).} }
    \label{fig:my_label0}
\end{figure*}

\begin{remark}
Unlike a small-gain condition which would restrict the right-hand side of inequality \eqref{eq4} to be 1, the generalized secant criterion \eqref{eq4} also exploits the phase of the loop and allows the right-hand side to be unbounded (see Figure \ref{fig:my_label0}). This secant criterion is also necessary for stability when the $a_i$'s are identical. 
\end{remark}

\section{Robustness Analysis of  Stable FLTI Networks}
\label{Sec:5}

{In this section, a framework for robustness analysis of FLTI systems under the influence of external anomalous disturbances is developed.  Our first goal is to calculate the (squared) $\mathcal H_2$ norm for stable commensurate order LTI systems with normal state matrices. Our second goal is to utilize this result to characterize size-dependent fundamental limits on the performance measure of cyclic linear dynamical networks and consensus networks.

\subsection{Robustness Measure}
\label{measure}
In order to find the robustness performance of system $\Sigma$ given by \eqref{eq2}, we utilize the frequency domain definition of (squared) $\mathcal H_2$-norm of system \cite{Doyle89}, {\it i.e.},
	\begin{equation}\label{H2normCalc}
	\rho(\Sigma,\alpha) ~:=~ \frac{1}{2\pi}\Tr\Big[\int_{-\infty}^{+\infty}{G^{\CT}(j\omega)G(j\omega) \: d\omega}\Big]
	\end{equation}
	with transfer matrix   
	\begin{equation}\label{eq:Gs}
	G(s)~=~ C \Big(s^\alpha I_{n}-A \Big)^{-1} B.
	\end{equation}
\begin{remark}
Using Parseval's theorem, we can evaluate the (squared) $\mathcal H_2$ norm in \eqref{H2normCalc} by
computing the time-domain $L_2$-norm of impulse response function
\[ H(t) ~:=~ \mathcal L^{-1}\{G(s)\},\]
where $H(t)$ can be obtained based on the generalized exponential matrix using Mittag-Leffler
function (cf. Theorem 4 in \cite{SiamiFrac2013}). Therefore, the time-domain and frequency-domain $L_2$-norms of $H(t)$ and $G(t)$ are equal, {\it i.e.},  
\begin{eqnarray}\label{H2normPars-1}
	\rho(\Sigma,\alpha) &=& \frac{1}{2\pi}\Tr\Big[\int_{-\infty}^{+\infty}{G^{\CT}(j\omega)G(j\omega) \: d\omega}\Big]\\&=&\int_0^\infty \Tr \left (H(t)H^\top(t)\right) ~dt.\label{H2normPars-2}
\end{eqnarray}
The (squared) $\mathcal H_2$ norm of these systems can be interpreted as the impulse response energy. Specifically, consider $m$ experiments where in each of them we only fed an unit impulse function at the $i$-th input of system $\Sigma$, {\it i.e.}~$\xi (t) = e_i \delta(t)$ where $\{e_1, e_2, \cdots, e_m\}$ is an orthonormal basis of $\R^m$ and $\delta(t)$ is the unit impulse function. Then, we denote the corresponding output by $y_i \in \R^p$. According to \eqref{H2normPars-2}, the (squared) $\mathcal H_2$ norm  is the sum of the $L_2$ norms of these $m$ outputs, {\it i.e.}
\[ 	\rho(\Sigma,\alpha) ~=~ \sum_{i=1}^m \int_0^\infty \left |y_i(t)\right |^2 dt.\]
\end{remark}

In the following result shows how the value of \eqref{H2normCalc} depends on the general properties of state matrix $A$.
\begin{theorem}
\label{th:main-updated}
Suppose that FLTI system \eqref{eq2} is stable, $B~=~C=~I_n$, and matrix $A$ is normal, {\it i.e.}, $A^\top A=AA^\top$. The performance measure \eqref{H2normCalc} for $\frac{1}{2} < \alpha < 2$ is given by 
\[ \rho(\Sigma,\alpha) ~=~ \sum_{i=1}^n \frac{ \sin \left ((\frac{\alpha \pi}{2} - \arg(\lambda_i(-A)))(1-\beta)\right)}{\alpha \,\sin (\frac{\pi}{\alpha} )\sin(\frac{\alpha \pi}{2}-\arg(\lambda_i(-A)))}\, |\lambda_i(A)|^{-\beta},\]
where $\beta=2-\alpha^{-1}$. Moreover, for $\alpha = 1$, this reduces to
\[\lim_{\alpha \rightarrow 1} \rho(\Sigma,\alpha) ~=~ \frac{1}{2}\sum_{i=1}^{n} \left(\Real\{\lambda_i(-A)\}\right )^{-1}. \]
\end{theorem}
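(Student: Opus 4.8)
The plan is to reduce the frequency-domain integral in \eqref{H2normCalc} to a sum over the eigenvalues of $A$, exploiting normality, and then to evaluate each scalar integral in closed form. Since $A$ is normal, there is a unitary $U$ with $A = U \Lambda U^{\CT}$ where $\Lambda = \diag[\lambda_1(A),\ldots,\lambda_n(A)]$. With $B = C = I_n$, the transfer matrix is $G(s) = (s^\alpha I_n - A)^{-1} = U (s^\alpha I_n - \Lambda)^{-1} U^{\CT}$. Because $U$ is unitary and the trace is unitarily invariant, the integrand $\Tr[G^{\CT}(j\omega)G(j\omega)]$ decouples into a sum of scalar terms, so I would first establish
\begin{equation}
\rho(\Sigma,\alpha) ~=~ \sum_{i=1}^n \frac{1}{2\pi}\int_{-\infty}^{+\infty} \frac{d\omega}{\bigl|(j\omega)^\alpha - \lambda_i(A)\bigr|^2}.
\label{eq:plan-decouple}
\end{equation}
This is the key structural simplification: it converts a matrix problem into $n$ independent scalar integrals indexed by the eigenvalues, and it is where normality is essential (a non-normal $A$ would leave cross terms).

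Next I would evaluate the scalar integral $I(\lambda) := \frac{1}{2\pi}\int_{-\infty}^{+\infty} |(j\omega)^\alpha - \lambda|^{-2}\, d\omega$ for a single eigenvalue $\lambda = \lambda_i(A)$. The plan is to split the integral into $\omega > 0$ and $\omega < 0$ parts and write $(j\omega)^\alpha = \omega^\alpha e^{\pm j\alpha\pi/2}$ on each half-line, then substitute $u = \omega^\alpha$ (so $d\omega = \tfrac{1}{\alpha} u^{1/\alpha - 1}\, du$), which produces a rational integrand in $u$ against the weight $u^{1/\alpha-1}$. Writing $\lambda = -|\lambda|e^{\mp j\arg(\lambda)}$ (recall $-A$ has argument $\arg(\lambda_i(-A))$, so the phase of the pole enters through the angle $\tfrac{\alpha\pi}{2} - \arg(\lambda_i(-A))$ measured against the stability boundary), the denominator becomes a shifted quadratic form in $u$. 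The resulting integral is of the classical type $\int_0^\infty \frac{u^{s-1}}{u^2 - 2u|\lambda|\cos\phi + |\lambda|^2}\, du$, whose value is a known ratio of sines; here $s = 1/\alpha$ and the exponent bookkeeping is what yields $\beta = 2 - \alpha^{-1}$ and the $|\lambda_i(A)|^{-\beta}$ factor.

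To evaluate that canonical integral I would invoke the standard formula
\begin{equation}
\int_0^\infty \frac{u^{s-1}}{u^2 + 2u r\cos\psi + r^2}\, du ~=~ \frac{\pi\, r^{s-2}}{\sin(\pi s)}\cdot \frac{\sin\bigl((s-1)\psi\bigr)}{\sin\psi},
\label{eq:plan-canonical}
\end{equation}
valid for $0 < s < 2$, which is exactly the regime $\tfrac{1}{2} < \alpha < 2$ enforces on $s = 1/\alpha$. Matching $s = 1/\alpha$, identifying $r = |\lambda|$ and the angle $\psi$ with the phase offset $\tfrac{\alpha\pi}{2} - \arg(\lambda_i(-A))$, and combining the two half-line contributions, the sines reassemble into the stated numerator $\sin\bigl((\tfrac{\alpha\pi}{2} - \arg(\lambda_i(-A)))(1-\beta)\bigr)$ and denominator $\alpha\sin(\tfrac{\pi}{\alpha})\sin(\tfrac{\alpha\pi}{2} - \arg(\lambda_i(-A)))$; note $\sin(\pi s) = \sin(\pi/\alpha)$ and $(s-1) = (1/\alpha - 1) = -(1-\beta)$ up to the sign absorbed by the odd sine. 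The convergence of the original integral and the applicability of \eqref{eq:plan-canonical} both rest on stability (Theorem \ref{th-1}), which guarantees $|\arg(\lambda)| > \tfrac{\alpha\pi}{2}$ so the pole stays off the integration contour. Finally, the $\alpha \to 1$ limit is obtained by direct substitution: $\beta \to 1$, $|\lambda|^{-\beta} \to |\lambda|^{-1}$, and an $L'H\hat{o}pital$ or small-angle expansion of the sine ratio collapses the summand to $\tfrac{1}{2}(\Real\{\lambda_i(-A)\})^{-1}$, recovering the classical Lyapunov/$\mathcal H_2$ value for integer-order systems. The main obstacle I anticipate is the careful angle- and sign-bookkeeping when splitting into the two half-lines and matching the phase conventions so that the two sine contributions combine cleanly into the single compact expression; the integral \eqref{eq:plan-canonical} itself is standard, but ensuring the arguments of the sines land exactly as stated (and that branch choices for $(j\omega)^\alpha$ are consistent) is the delicate part.
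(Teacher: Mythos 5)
Your proposal follows essentially the same route as the paper: exploit normality to diagonalize $A$ by a unitary matrix, use unitary invariance of the trace to decouple $\rho(\Sigma,\alpha)$ into $n$ scalar frequency integrals over the eigenvalues, and then evaluate each integral in closed form. The only difference is that the paper delegates the scalar-integral evaluation to Proposition 3 of Malti et al.\ (and to a known integer-order result for the $\alpha=1$ case), whereas you sketch that evaluation directly via the classical Mellin-type integral $\int_0^\infty u^{s-1}(u^2+2ur\cos\psi+r^2)^{-1}\,du$ — a more self-contained but equivalent path, with the sign/branch bookkeeping you flag being exactly the content of the cited proposition.
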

\begin{proof}
	In order to find the $\mathcal H_2$-norm of system $\Sigma$ with $B=C=I_n$, we utilize the frequency domain definition of $\mathcal H_2$-norm \eqref{H2normCalc}
	with transfer matrix   
	\begin{align}\label{eq:Gs}
	G(s)~=~ \Big(s^\alpha I_{n}-A\Big)^{-1}.
	\end{align}
	
	We consider spectral decomposition of normal matrix $A$, which is,
	\begin{align*}
	A~=~U \Lambda U^{\CT},
	\end{align*}
	where $U$ is a unitary matrix ({\it i.e.}, $U^{\CT}U=UU^{\CT}=I_n$) of eigenvectors and $\Lambda=\diag(\lambda_1,\ldots,\lambda_{n})$ is the diagonal matrix of eigenvalues of $A$.
	Hence, we have
	\begin{align}\label{eq:GHG-1}
	&\Tr\big[G^{\CT}(j \omega)G(j \omega)\big]\nonumber\\
	=&\Tr\Bigg[  U \diag \Big[\frac{1}{\overline{(j \omega)^\alpha-\lambda_1} },\dots,\frac{1}{\overline{(j \omega)^\alpha-\lambda_{n}} }\Big]\nonumber\\
	\:\:&\diag \Big[\frac{1}{(j \omega)^\alpha-\lambda_1 },\dots,\frac{1}{(j \omega)^\alpha-\lambda_{n} }\Big]
	U^{\CT}\Bigg]
	\end{align}
	and by substituting (\ref{eq:GHG-1}) in \eqref{H2normCalc}, we obtain
	\begin{eqnarray}
	\hspace{-0.15cm}\rho(\Sigma,\alpha)
	= \frac{1}{2\pi} \sum_{i=1}^{n}{\int_{-\infty}^{+\infty}\hspace{-0.3cm}\frac{ d\omega}{\big(\overline{(j \omega)^\alpha-\lambda_i} \big)\big((j \omega)^\alpha-\lambda_i \big)}}.
	\end{eqnarray}
	Let us assume $\lambda_i= |\lambda_i|{\rm e}^{j \arg(\lambda_i)}$.
	Simplifying the integral above
for $\frac{1}{2} < \alpha < 2$ and using Proposition 3 in \cite{malti2011analytical}, 	we obtain 
\[ \rho(\Sigma,\alpha) ~=~ \sum_{i=1}^n \frac{ \sin \left ((\frac{\alpha \pi}{2} - \arg(-\lambda_i))(1-\beta)\right)}{\alpha \,\sin (\frac{\pi}{\alpha} )\sin(\frac{\alpha \pi}{2}-\arg(-\lambda_i))}\, |\lambda_i|^{-\beta},\]
where $\beta=2-\alpha^{-1}$. Moreover, for $\alpha = 1$, this reduces to $\rho(\Sigma,1) ~=~ \frac{1}{2}\sum_{i=1}^{n} \left(\{\Real\{-\lambda_i\}\right)^{-1}$ which is known result for integer-order systems \cite{SiamiAutomatica}.
\end{proof}

We should note that for $\alpha \in (0, 0.5]$ the FLTI system $\Sigma$ might be stable. However, based on Corollary 4 in \cite{malti2011analytical}, its $\mathcal H_2$ norm is unbounded. Moreover, according to Theorem \ref{th:main-updated}, the $\mathcal H_2$ norm tends to infinity as $\alpha$ approaches the instability limit $\alpha = 2$.

}

{ 

\subsection{Linear Networks With Cyclic Interconnection Topology}\label{sec:IV}

As discussed in Section \ref{sec:III}, the class of cyclic networks has been studied in the context of systems biology \cite{tyson1978dynamics}. In order to obtain the (squared) $\mathcal H_2$ norm of the commensurate order cyclic network (\ref{network}) we need to use the result in Section \ref{sec:III} to guarantee stability commensurate order  cyclic networks.

\begin{corollary}
For the cyclic linear dynamical network \eqref{eq66} with $\alpha \in (0,2)$, state matrix 
\begin{eqnarray}
	A = \left[
	\begin{array}{cccccc}
	 -a&0&\cdots&0&-c\\
	  c&-a&\cdots&0&0\\
	 \vdots &&\ddots &&\vdots&\\
 	  0&0&\cdots&-a&0\\
  	 0&0&\cdots&c&-a
	\end{array}
	\right],
	\label{eq6}
\end{eqnarray} and output matrix $C = B = I_n$, assume the stability conditions in Theorem \ref{th-main} hold, {\it i.e.}, $\gamma :=	\frac{c}{ a} < \frac{\sin(\frac{\alpha \pi}{2})}{\sin({\frac{\alpha \pi}{2}-\frac{\pi}{n}})}$ or $\alpha \leq\frac{2}{n}$. Then the (squared) $\mathcal H_2$ norm is given by
\[ \rho(\Sigma,\alpha) ~=~ \sum_{k=1}^n \frac{ \sin \left ((\frac{\alpha \pi}{2} - \arg(-\lambda_k))(\alpha^{-1}-1)\right)}{\alpha \,\sin (\frac{\pi}{\alpha} )\sin(\frac{\alpha \pi}{2}-\arg(-\lambda_k))}\, |\lambda_k|^{\alpha^{-1}-2},\]
where $\lambda_k= -a + c \, {\rm e}^{j\frac{2\pi k}{n}}$ for $k=1,2,\cdots,n$. Moreover, for $\alpha = 1$, this reduces to
\begin{eqnarray}
\rho(\Sigma,1) ~=~ \left\{\begin{array}{ccc}
\frac{n\tan \frac{\beta}{2}}{2 {c} \sin {\frac{\beta}{n}} }&~~\textrm{if}~~&a < c 
\\
\frac{n^2}{4 {c} }&~~\textrm{if}~~&a = c \\
\frac{n\tanh \frac{{\beta}}{2}}{2 {c} \sinh {\frac{\beta}{n}} }&~~\textrm{if}~~&a > c
\end{array}
\right.\label{exact_lower}
	\end{eqnarray}   
%
%
where
	\begin{eqnarray}
		\beta := 
\left\{\begin{array}{ccc}
\mathrm{arcos}(\frac{a}{c}) ~n&~~\textrm{if}~~&a \leq c  \\
\mathrm{arcosh}(\frac{a}{c}) ~n& ~~\textrm{if}~~&a > c
\end{array}\right..				
\label{eq77}
	\end{eqnarray}

\end{corollary}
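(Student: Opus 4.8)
The plan is to recognize that the uniform cyclic matrix $A$ in \eqref{eq6} is \emph{normal}, so that the closed form of Theorem \ref{th:main-updated} applies verbatim once its spectrum is known. First I would write $A = -aI_n + cS$, where $S$ is the signed cyclic-shift matrix carrying $1$ on the subdiagonal and $-1$ in the top-right corner. Since $S$ is a signed permutation matrix it is orthogonal, so $SS^\top = S^\top S = I_n$, and therefore $A^\top A = AA^\top$; this verifies the normality hypothesis. Stability is assumed (it is exactly the hypothesis imported from Theorem \ref{th-main}), and for $\tfrac12 < \alpha < 2$ we are then in the regime where $\rho(\Sigma,\alpha)$ is finite, so the theorem is directly applicable.

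Second, I would compute the eigenvalues. As in the uniform case of the proof of Theorem \ref{th-main}, the characteristic equation is $(\lambda+a)^n + c^n = 0$, whose roots are $\lambda_k = -a + c\,e^{j(2k-1)\pi/n}$ for $k=1,\dots,n$ (equivalently the $n$ roots of $z^n+1=0$, scaled by $c$ and shifted by $-a$). Substituting these $\lambda_k$ into the formula of Theorem \ref{th:main-updated}, together with $\beta = 2-\alpha^{-1}$ so that $1-\beta = \alpha^{-1}-1$ and $-\beta = \alpha^{-1}-2$, reproduces the first displayed expression for $\rho(\Sigma,\alpha)$ immediately; this step is a pure substitution and needs no further work.

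Third, and this is the crux, I would specialize to $\alpha=1$. Here Theorem \ref{th:main-updated} gives $\rho(\Sigma,1) = \tfrac12\sum_{k}\big(\Real\{-\lambda_k\}\big)^{-1} = \tfrac12\sum_{k}\big(a - c\cos\theta_k\big)^{-1}$ with $\theta_k = (2k-1)\pi/n$. To obtain the closed form I would factor $a - c\cos\theta_k = -\tfrac{c}{2 z_k}(z_k-z_+)(z_k-z_-)$, where $z_k = e^{j\theta_k}$ runs over the roots of $z^n+1=0$ and $z_\pm$ are the roots of $z^2 - \tfrac{2a}{c}z + 1 = 0$, so that $z_+z_- = 1$. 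Using the root-of-unity identity $\sum_k (z_k - w)^{-1} = -n\,w^{n-1}/(w^n+1)$ together with partial fractions in $z_k$, the whole sum collapses to $\dfrac{n(w-1)}{c\,(z_+-z_-)(w+1)}$ with $w := z_+^{\,n}$. The three cases then come from the nature of $z_\pm$: for $a>c$ they are real, $z_\pm = e^{\pm\mu}$ with $\mu = \mathrm{arcosh}(a/c)$, giving the $\tanh$/$\sinh$ branch; for $a<c$ they are conjugate and unit-modulus, $z_\pm = e^{\pm j\psi}$ with $\psi = \arccos(a/c)$, giving the $\tan$/$\sin$ branch; and $a=c$ is the confluent limit $z_+=z_-=1$, recovered by letting $\beta \to 0$ to obtain $n^2/(4c)$. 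The main obstacle is precisely this trigonometric-sum evaluation: fixing the sign in the $z^n+1$ factorization and tracking the $\mathrm{arcosh}/\arccos$ dichotomy so that the three branches of \eqref{exact_lower} and the definition \eqref{eq77} of $\beta$ emerge cleanly.
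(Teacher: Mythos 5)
Your proposal is correct and, for the fractional-order part, follows the paper's route exactly: verify that $A$ is normal and apply Theorem \ref{th:main-updated} with $\beta=2-\alpha^{-1}$, so that $1-\beta=\alpha^{-1}-1$ and $-\beta=\alpha^{-1}-2$. Your normality check via $A=-aI_n+cS$ with $S$ a signed permutation matrix (hence orthogonal, so $S^\top S=SS^\top=I_n$) is a cleaner justification than the paper's bare assertion that $AA^\top=A^\top A$. Where you genuinely diverge is the $\alpha=1$ case: the paper disposes of the closed form \eqref{exact_lower} by citing Theorem 7 of an external reference, whereas you rederive it from $\tfrac12\sum_k(a-c\cos\theta_k)^{-1}$ by factoring $a-c\cos\theta_k=-\tfrac{c}{2z_k}(z_k-z_+)(z_k-z_-)$ with $z_+z_-=1$ and summing over the roots of $z^n+1=0$ via $\sum_k(z_k-w)^{-1}=-nw^{n-1}/(w^n+1)$; this does collapse to $n(w-1)/\big(c(z_+-z_-)(w+1)\big)$ with $w=z_+^n$, and the three branches (including the confluent limit $n^2/(4c)$ as $\beta\to 0$) follow as you describe. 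Your route buys a self-contained proof at the cost of some trigonometric bookkeeping. One further point in your favor: your eigenvalues $\lambda_k=-a+c\,e^{j(2k-1)\pi/n}$, i.e.\ the roots of $(\lambda+a)^n+c^n=0$, are the correct spectrum of the stated negative-feedback cyclic matrix and agree with the uniform case in the proof of Theorem \ref{th-main}; the expression $\lambda_k=-a+c\,e^{j2\pi k/n}$ printed in the corollary lacks the $\pi/n$ offset and is in fact incompatible with \eqref{exact_lower} (it would place an eigenvalue at $-a+c$, making the $\alpha=1$ sum diverge at $a=c$). Your substitution step therefore silently corrects a typo in the statement rather than introducing an error.
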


\begin{proof}
    First note that $A$ is a normal matrix because  $AA^\top = A^\top A$.    Using the fact that $A$ is normal, Theorem \ref{th:main-updated} and \cite[Th. 7]{SiamiAutomatica}, we get the desired result.
\end{proof}
}
\subsection{FLTI Consensus Networks } 
\label{sec:158}

	In this subsection, we consider the class of  linear fractional-order dynamical networks that consist of multiple agents with scalar state variables $x_i$ and control inputs $u_i$ whose dynamics evolve in time according to 	
\begin{eqnarray}
\frac{d^{\alpha}}{d t^{\alpha}}{x}_i(t) & = & u_i(t) +\xi_i(t) \label{TI-consensus-algorithm} \\
y_i(t) & = & x_i(t) - \bar{x}(t)  \label{TI-consensus-algorithm-2}
\end{eqnarray}
for all $i=1,\ldots,n$, where $x_i(0)=x_i^*$ is the initial condition and \[\bar{x}(t)=\frac{1}{n}\big(x_1(t)+\ldots+x_n(t)\big)\] 
is the average of all states at time $t$. The impact of the uncertain environment on each agent's dynamics is modeled by the exogenous anomalous disturbance input $\xi_i(t)$. 
By applying the following feedback control law to the agents of this network 
\begin{equation}
u_i(t) ~=~\sum_{j=1}^{n} k_{ij} \big(x_j(t) - x_i(t)\big),\label{feedback-law}
\end{equation}
the resulting closed-loop system will be a first-order linear consensus network. The closed-loop dynamics of the network (\ref{TI-consensus-algorithm})-\eqref{TI-consensus-algorithm-2} with feedback control law \eqref{feedback-law} can be written in the following compact form
\begin{eqnarray}
\Gamma:
    \begin{cases}
	\frac{d^{\alpha}}{d t^{\alpha}} x(t)  =   -L\, x(t)~+~\xi(t)\label{first-order}\\
	y(t)  =  M_n \, x(t), \label{first-order-G}
	\end{cases}
\end{eqnarray}
%
with  initial condition $x(0)=  x^*$, where  $x = [x_1,  \ldots,  x_n]^{\rm T}$ is the state, $y = [y_1,  \ldots,  y_n]^{\rm T}$ is the output, and $\xi = [\xi_1,  \ldots,  \xi_n]^{\rm T}$ is the anomalous disturbance input of the network. 
The state matrix of the network is a graph Laplacian matrix that is defined by $L=[l_{ij}]$, where 
\begin{equation}
\displaystyle l_{ij} := \left\{\begin{array}{ccc}
-k_{ij} & \textrm{if} & i \neq j \\
 &  &  \\
k_{i1}+\ldots+k_{in}& \textrm{if} & i=j
\end{array}\right.
\end{equation}
and the output matrix is a  centering matrix that is defined by
\begin{equation}
M_n~:=~I_{n} - \frac{1}{n}J_n. 
\end{equation}

The underlying coupling graph of the consensus  network \eqref{first-order}-\eqref{first-order-G} is a graph $\G=(\V,\mathcal E, w)$ with node set $\V=\{1,\ldots,n\}$, edge set 
\begin{equation} 
	\EE=\Big\{ \{i,j\}~\big|~\forall~i,j \in \V,~k_{ij} \neq 0\Big\}, \label{edge-set}
\end{equation}
and weight function $w(e)=k_{ij}$ for all $e=\{i,j\} \in \EE$, and $w(e)=0$ if $e \notin \EE$. The Laplacian matrix of graph $\G$ is equal to $L$. 
\begin{assumption}\label{assump-simple}
The coupling graph $\G$ of the consensus network \eqref{first-order}-\eqref{first-order-G} is connected and time-invariant.
Moreover, all feedback gains (weights) satisfy the following properties for all $i,j \in \V$: 

\vspace{0.1cm}
\noindent (a)~non-negativity: $k_{ij} \geq 0$, \\
\noindent (b)~symmetry: $k_{ij}=k_{ji}$,\\
\noindent (c)~simpleness: $k_{ii}= 0$.
\vspace{0.1cm}
\end{assumption}

Property (b) implies that feedback gains are symmetric and (c) means that there is no self-feedback loop in the network.

According to Assumption \ref{assump-simple}, the underlying coupling graph is undirected, connected, and simple. Assumption \ref{assump-simple} implies that only one of the modes of network \eqref{first-order} is marginally stable  with eigenvector $\mathbbm{1}_n$ and all other ones are stable (see Theorem \ref{th-1} and Definition \ref{def:marg}). The marginally stable mode, which corresponds to the only zero Laplacian eigenvalue of $L$, is unobservable from the output \eqref{first-order-G}. The reason is that the output matrix of the network satisfies $M_n \mathbbm{1}_n= 0$.\\ 
%
\begin{corollary}
\label{th:consensus}
Assume that there is no exogenous noise input, {\it i.e.}, $\xi(t) =  0$ for all time, and Assumption \ref{assump-simple} holds, then the states of all agents converge to a consensus state, which for network $\Gamma $ \eqref{first-order}, the consensus state is
\begin{equation}
\lim_{t \rightarrow \infty} x(t) ~=~ \frac{1}{\alpha}\bar x(0) \mathbbm{1}_n~=~\frac{1}{n\alpha}\mathbbm{1}_n\mathbbm{1}_n^{\text T} ~x^*, \label{limit-zero} %
\end{equation}
where {$\alpha \in (0,2)$}.
\end{corollary}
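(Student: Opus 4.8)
The plan is to reduce the vector fractional dynamics to $n$ decoupled scalar fractional ODEs by exploiting the spectral structure of the Laplacian, solve each one through its Mittag-Leffler representation, and then identify which modes survive in the limit. First I would invoke Assumption \ref{assump-simple}: since $\G$ is connected, undirected and simple, $L$ is symmetric positive semidefinite, so it admits an orthonormal eigenbasis $V=[v_1\cdots v_n]$ with $L=V\Lambda V^\top$, $\Lambda=\diag(0,\lambda_2,\dots,\lambda_n)$ and $0=\lambda_1<\lambda_2\le\cdots\le\lambda_n$, where $v_1=\tfrac{1}{\sqrt n}\mathbbm{1}_n$. Setting $z(t)=V^\top x(t)$ turns \eqref{first-order} with $\xi\equiv 0$ into the decoupled scalar equations $\frac{d^{\alpha}}{dt^{\alpha}}z_k=-\lambda_k z_k$, with $z_k(0)=v_k^\top x^*$.

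Second, I would classify the modes using Theorem \ref{th-1}. For $k\ge 2$ the eigenvalue $-\lambda_k$ of $-L$ is a negative real number, so $|\arg(-\lambda_k)|=\pi>\frac{\alpha\pi}{2}$ for every $\alpha\in(0,2)$; hence each such mode is asymptotically stable, and writing its solution via the generalized (Mittag-Leffler) exponential of \cite{SiamiFrac2013} gives $z_k(t)=E_\alpha(-\lambda_k t^\alpha)\,z_k(0)\to 0$ as $t\to\infty$. This matches the marginal-stability bookkeeping recorded after Assumption \ref{assump-simple} (cf. Definition \ref{def:marg}): every mode except the one along $\mathbbm{1}_n$ decays, so only $z_1$ can contribute to the steady state.

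Third --- and this is the crux --- I would pin down the limit of the single surviving mode $z_1$ associated with $\lambda_1=0$, since $\lim_{t\to\infty}x(t)=v_1\big(\lim_{t\to\infty}z_1(t)\big)$. Two routes are natural: evaluating the Mittag-Leffler solution formula of \cite{SiamiFrac2013} directly on the $\lambda_1=0$ mode, or passing to the Laplace domain and applying the final-value theorem $\lim_{t\to\infty}z_1(t)=\lim_{s\to 0}s\,Z_1(s)$ to the Caputo-transformed dynamics $X(s)=s^{\alpha-1}(s^\alpha I+L)^{-1}x^*$ (for the relevant initial data). The delicate point, which I expect to be the main obstacle, is to account correctly for how the fractional initial data and the branch behaviour of $s^\alpha$ near the origin enter this marginal mode: this is genuinely nonstandard compared with integer-order consensus, where conservation of $\mathbbm{1}_n^\top x$ immediately fixes the limit at the initial average, and it is precisely this accounting that must produce the $\alpha$-dependent prefactor in \eqref{limit-zero}. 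Once the scalar limit is fixed, substituting $v_1=\tfrac{1}{\sqrt n}\mathbbm{1}_n$ and $z_1(0)=\tfrac{1}{\sqrt n}\mathbbm{1}_n^\top x^*$ collapses the answer to a multiple of $\tfrac{1}{n}\mathbbm{1}_n\mathbbm{1}_n^\top x^*$, and I would close by sanity-checking against $\alpha=1$, where the prefactor $\tfrac{1}{\alpha}$ reduces to $1$ and recovers classical average consensus $\bar x(0)\mathbbm{1}_n$.
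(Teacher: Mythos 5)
Your modal decomposition is the right scaffolding and matches what the paper's one-line proof implicitly relies on: diagonalize $L$ via an orthonormal eigenbasis, observe that every mode with $\lambda_k>0$ has $\arg(-\lambda_k)=\pi>\frac{\alpha\pi}{2}$ and decays through its Mittag-Leffler representation, and conclude that only the $\mathbbm{1}_n$-mode survives. The problem is that the entire content of the corollary is the prefactor $\frac{1}{\alpha}$, and you explicitly leave that step open (``it is precisely this accounting that must produce the $\alpha$-dependent prefactor''). Worse, both concrete routes you propose for closing it yield prefactor $1$, not $\frac{1}{\alpha}$: the Caputo equation $\frac{d^{\alpha}}{dt^{\alpha}}z_1=0$ for $0<\alpha\le 1$ has the constant solution $z_1(t)=z_1(0)$, and the final-value computation gives $\lim_{s\to 0}s\cdot s^{\alpha-1}(s^{\alpha})^{-1}z_1(0)=z_1(0)$. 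So the natural completion of your argument contradicts the stated constant rather than establishing it. This is a genuine gap, not a presentational one.

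The paper's proof is only a citation to Theorems 4 and 5 of \cite{SiamiFrac2013} together with \cite{mainardi2000mittag}, so the missing ingredient lives there: the constant $\frac{1}{\alpha}$ is the one appearing in the Mittag-Leffler asymptotics $E_\alpha(z)\sim\frac{1}{\alpha}\exp\bigl(z^{1/\alpha}\bigr)$ valid as $|z|\to\infty$ in the critical sector $|\arg z|\le\frac{\alpha\pi}{2}$, which is how the generalized exponential matrix of \cite{SiamiFrac2013} treats marginally stable modes (cf.\ Definition \ref{def:marg}); your argument never engages with that representation, treating the zero eigenvalue by the ordinary constant-solution route instead. To close the gap you would need to state and use the exact solution formula from those theorems for the critical mode, rather than the elementary Caputo/final-value computations. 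Separately, for $\alpha\in(1,2)$ the scalar equation $\frac{d^{\alpha}}{dt^{\alpha}}z_1=0$ admits solutions $z_1(0)+\dot z_1(0)t$, so your argument as written does not even give convergence on the full range $\alpha\in(0,2)$ claimed in \eqref{limit-zero} without addressing the second initial condition.
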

\begin{proof}
This is a direct consequence of Theorems 4 and 5 of \cite{SiamiFrac2013} and \cite{mainardi2000mittag}.
\end{proof}

\vspace{.1cm}
\begin{remark}
For the case of $\alpha =1$, the result of Corollary \ref{th:consensus} recovers the well-known result of ordinary consensus problems \cite{olfati}.
\end{remark}

When the network is fed with a nonzero exogenous noise input, the limit behavior \eqref{limit-zero} is not expected anymore and the state of all agents will be fluctuating around the consensus state without converging to it. We utilize the definition of $\mathcal H_2$-norm of the network as discussed in Subsection \ref{measure} to capture the effect of noise propagation throughout the network and   quantify degrees to which the state of all agents are dispersed from the consensus state. Let us define
	\begin{equation}\label{H2normCalc-1}
	\rho(\Gamma, \alpha) ~=~ \frac{1}{2\pi}\Tr\Big[\int_{-\infty}^{+\infty}{G^{\CT}(j\omega)G(j\omega) \: d\omega}\Big]
	\end{equation}
	with transfer matrix   
	\begin{equation}\label{eq:Gs}
	G(s)~=~ M_{n} \Big(s^\alpha I_{n}+L \Big)^{-1}.
	\end{equation}
Although $G(s)$ is not asymptotically stable, its single marginally stable mode is not observable in the output, which, consequently results in a bounded $\mathcal H_2$-norm for the network.

\begin{theorem}
\label{th:main2}
Suppose that an FLTI consensus network  \eqref{first-order} over graph $\G$ is given. The performance measure \eqref{H2normCalc-1} for $\frac{1}{2} < \alpha < 2$ is given by 
\[ \rho(\Gamma,\alpha) ~=~ \left|\frac{\cot(\frac{\alpha\pi}{2})}{\alpha \sin(\frac{\pi}{\alpha})}\right| \sum_{i=2}^{n} \lambda_i^{-\beta},\]
where $\beta=2-\alpha^{-1}$. Moreover, for $\alpha = 1$, this reduces to
\[ \rho(\Gamma,1) ~=~ \frac{1}{2}\sum_{i=2}^{n} \lambda^{-1}_i. \]
\end{theorem}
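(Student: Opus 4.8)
The plan is to exploit the facts that the graph Laplacian $L$ of an undirected connected graph is symmetric, hence normal, and that the centering output matrix $M_n$ shares the eigenbasis of $L$. First I would take the spectral decomposition $L = U\Lambda U^{\CT}$ with $U$ orthogonal, $\Lambda = \diag(\lambda_1,\ldots,\lambda_n)$, and the eigenvector associated with $\lambda_1 = 0$ equal to $\tfrac{1}{\sqrt n}\mathbbm{1}_n$ (Section II). Since $M_n = I_n - \tfrac1n J_n$ is precisely the orthogonal projector onto the complement of $\mathbbm{1}_n$, it commutes with $L$ and satisfies $U^{\CT} M_n U = \diag(0,1,\ldots,1)$. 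Substituting into $G(s) = M_n\big(s^\alpha I_n + L\big)^{-1}$ then yields
\[ G(s) = U \diag\Big(0,\ \tfrac{1}{s^\alpha + \lambda_2},\ \ldots,\ \tfrac{1}{s^\alpha + \lambda_n}\Big) U^{\CT},\]
so the marginally stable mode at $\lambda_1 = 0$ is annihilated in the output.

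Next I would form $G^{\CT}(j\omega)G(j\omega)$ and take its basis-invariant trace, obtaining
\[ \Tr\big[G^{\CT}(j\omega)G(j\omega)\big] = \sum_{i=2}^n \frac{1}{\big|(j\omega)^\alpha + \lambda_i\big|^2}.\]
Because the troublesome $\lambda_1 = 0$ term carries a zero coefficient, every surviving integrand decays fast enough that the integral is finite; this is exactly what keeps the $\mathcal H_2$-norm bounded despite the absence of asymptotic stability. Each remaining term is the scalar integral already evaluated in the proof of Theorem \ref{th:main-updated} (equivalently Proposition 3 of \cite{malti2011analytical}), applied to the real positive eigenvalue $\lambda_i$ of $-A = L$, i.e. with $\arg(\lambda_i(-A)) = 0$.

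I would then specialize the formula of Theorem \ref{th:main-updated} at $\arg = 0$ and simplify. Using $\beta = 2 - \alpha^{-1}$ gives $1 - \beta = \tfrac{1-\alpha}{\alpha}$, hence $\tfrac{\alpha\pi}{2}(1-\beta) = \tfrac{\pi}{2} - \tfrac{\alpha\pi}{2}$ and $\sin\big(\tfrac{\alpha\pi}{2}(1-\beta)\big) = \cos(\tfrac{\alpha\pi}{2})$. This collapses the angular factor to $\cot(\tfrac{\alpha\pi}{2})$, delivering the claimed expression up to sign; since $\rho(\Gamma,\alpha)\ge 0$ the sign is fixed by the absolute value, which is the only subtle bookkeeping point, because both $\cot(\tfrac{\alpha\pi}{2})$ and $\sin(\tfrac{\pi}{\alpha})$ change sign across $\alpha = 1$ on $(\tfrac12,2)$.

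Finally, for the $\alpha = 1$ reduction I would take the limit of the scalar coefficient $\tfrac{\cot(\alpha\pi/2)}{\alpha\sin(\pi/\alpha)}$ as $\alpha \to 1$, a $0/0$ form: writing $\alpha = 1+\epsilon$ gives $\cot(\tfrac{\alpha\pi}{2}) \sim -\tfrac{\pi\epsilon}{2}$ and $\sin(\tfrac{\pi}{\alpha}) \sim \pi\epsilon$, so the coefficient tends to $-\tfrac12$, whose absolute value $\tfrac12$ together with $\beta \to 1$ recovers $\rho(\Gamma,1) = \tfrac12\sum_{i=2}^n \lambda_i^{-1}$. The main obstacle, and the step deserving most care, is justifying that the unobservable zero mode may simply be dropped — equivalently, that restricting the dynamics to the observable subspace $\mathbbm{1}_n^\perp$ produces an asymptotically stable commensurate system (eigenvalues $-\lambda_2,\ldots,-\lambda_n$ with $|\arg| = \pi > \tfrac{\alpha\pi}{2}$ for $\alpha < 2$) to which Theorem \ref{th:main-updated} legitimately applies.
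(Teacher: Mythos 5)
Your proposal is correct and follows essentially the same route as the paper's proof: spectral decomposition of $L$, simultaneous diagonalization of $M_n$ so that the zero mode is annihilated in $G(s)$, reduction of the trace to a sum of scalar integrals over $\lambda_2,\ldots,\lambda_n$, and evaluation via the integral formula of \cite{malti2011analytical}. Your explicit specialization of the angular factor at $\arg=0$ to $\cot(\tfrac{\alpha\pi}{2})$ and the careful $0/0$ limit at $\alpha=1$ supply details the paper delegates to the citation, but the argument is the same.
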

\begin{proof}
	In order to find the performance of the network \eqref{first-order}-\eqref{first-order-G}, we utilize the frequency domain definition of $\mathcal H_2$-norm \eqref{H2normCalc-1}
	with transfer matrix   
	\begin{align}\label{eq:Gs}
	G(s)~=~ M_{n} \Big(s^\alpha I_{n}+L\Big)^{-1}.
	\end{align}
	Although $G(s)$ is not asymptotically stable, its single marginally stable mode is not observable in the output, {resulting in} a bounded $\mathcal H_2$-norm for the network.
	We consider spectral decomposition of Laplacian matrix $L$, which is,
	\begin{align*}
	L~=~Q \Lambda Q^{\T},
	\end{align*}
	where $Q=[q_1,q_2, \dots , q_{n}] \in \mathbb{R}^{n\times n}$ is the orthonormal matrix of eigenvectors and $\Lambda=\diag(\lambda_1,\ldots,\lambda_{n})$ is the diagonal matrix of eigenvalues. We recall that $\lambda_1=0$ for the reason that the graph is undirected and it has no self-loops.
	Therefore,
	\begin{align}
	M_{n}~=&~I_{n}-Q \diag [1,0, \dots,0] Q^{\T}\nonumber\\\label{eq:Mn}
	~=&~Q \diag[0,1, \dots,1] Q^{\T},
	\end{align}
	and
	\begin{align}
	L~=~Q \diag[0,\lambda_2, \dots, \lambda_{n}] Q^{\T}. \label{eigen-decom}
	\end{align}
	Also, substituting \eqref{eq:Mn} and \eqref{eigen-decom} into (\ref{eq:Gs}), we obtain
	\begin{equation}
		G(s)~=~Q \diag\big[0,\frac{1}{s^\alpha+\lambda_2 },\dots,\frac{1}{s^\alpha+\lambda_{n}}\big] Q^{\T} .
	\end{equation}
	Hence, we have
	\begin{align}\label{eq:GHG}
	&\Tr\big[G^{\CT}(j \omega)G(j \omega)\big]\nonumber\\
	=&\Tr\Bigg[  Q \diag \Big[0,\frac{1}{\overline{\lambda_2 +(j \omega)^\alpha}},\dots,\frac{1}{\overline{\lambda_{n} +(j \omega)^\alpha} }\Big]\nonumber\\
	\:\:&\diag \Big[0,\frac{1}{(j \omega)^\alpha+\lambda_2 },\dots,\frac{1}{(j \omega)^\alpha+\lambda_{n} }\Big]
	Q^{\T}\Bigg]
	\end{align}
	and by substituting (\ref{eq:GHG}) in \eqref{H2normCalc-1}, we obtain
	\begin{eqnarray}
	\hspace{-0.15cm}\rho(\Gamma,\alpha)
	= \frac{1}{2\pi} \sum_{i=2}^{n}{\int_{-\infty}^{+\infty}\hspace{-0.3cm}\frac{ d\omega}{\big(\overline{(j \omega)^\alpha+\lambda_i} \big)\big(\lambda_i + (j \omega)^\alpha \big)}}.
	\end{eqnarray}
	Simplifying the integral above
for $\frac{1}{2} < \alpha < 2$ and using Corollary 4 in \cite{malti2011analytical}, 	we obtain 
\[ \rho(\Gamma,\alpha) ~=~ \left |\frac{\cot(\frac{\alpha\pi}{2})}{\alpha \sin(\frac{\pi}{\alpha})}\right| \sum_{i=2}^{n} \lambda_i^{-\beta},\]
where $\beta=2-\alpha^{-1}$. Moreover, for $\alpha = 1$, this reduces to $\rho(\Gamma,1) ~=~ \frac{1}{2}\sum_{i=2}^{n} \lambda^{-1}_i$.
\end{proof}

\vspace{.2cm}
\begin{remark}
The result of Theorem \ref{th:main2} shows that the $\mathcal H_2$-norm of an FLTI consensus network is closely related to the spectral zeta function of its underlying graph.\footnote{For a given Laplacian $L$, its corresponding spectral zeta function of order $q \geq 1$ is defined by $\zeta_q (L) := \left( \sum_{i=2}^n \lambda_i^q \right)^{\frac{1}{q}}$.} 
The performance measure \eqref{H2normCalc-1} also relates to the concept of coherence in consensus networks and the expected dispersion of the state of the system in steady state \cite{SiamiTAC2016,Bamieh12,SiamiTAC2018}. In the case that $\alpha =1$, it also has close connections to the total effective resistance of graph \cite{Bamieh12,SiamiTAC2016}.
\end{remark}

\section{Discussion}
In this section, we consider several numerical examples to illustrate our result in Section \ref{main}.

{
\begin{example}
\label{ex-1}
\begin{figure}
    \centering
	\includegraphics[trim = 60.5 5.5 10.5 5.5, clip,width=.7 \textwidth]{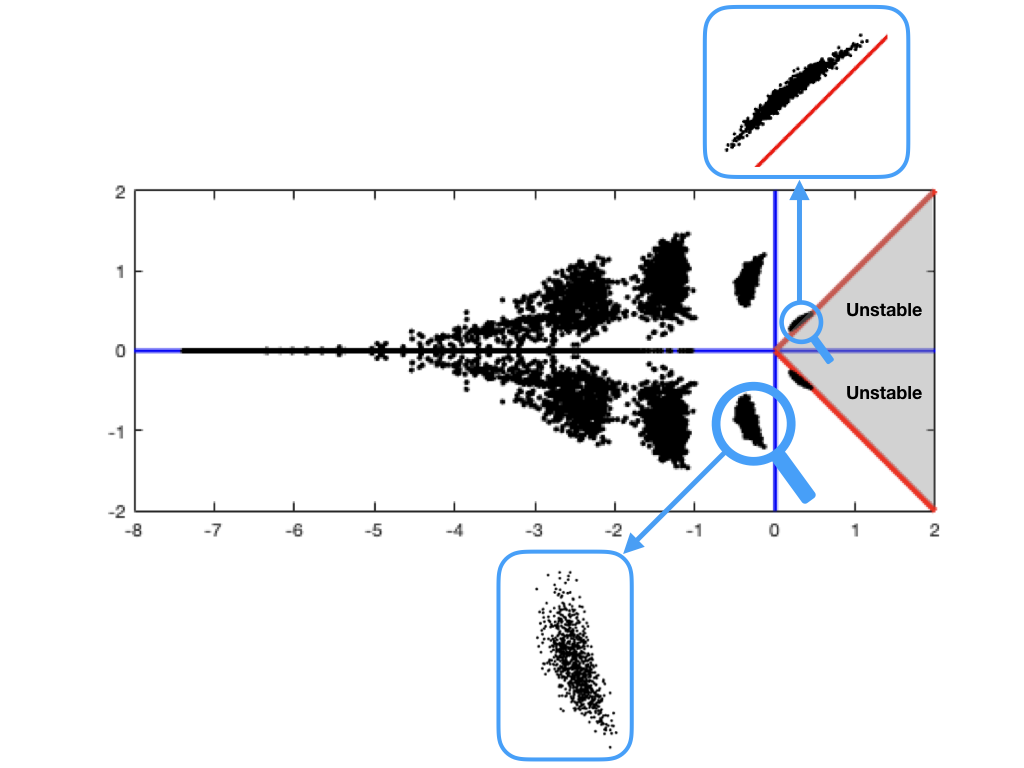}
    \caption{We consider $1000$ randomly generated networks with $10$ nodes, $\gamma = 1.5575$, and $\theta=2$ in Example \ref{ex-1}. Black dots show eigenvalues of $1000$ randomly generated state matrices $A$ with sparsity pattern given by \eqref{eq6} where $\frac{\mathfrak c}{\mathfrak a} = \gamma$. The gray area shows the unstable area, {\it i.e.},  $|\text{arg}(\lambda)|< \frac{\alpha \pi}{2}$, and red line segments show $|\text{arg}(\lambda)| = \frac{\alpha \pi}{2}$. {The zoomed-in areas for the clusters are presented by the blue rounded rectangles (magnified three times).}}
    \label{fig:my_label}
\end{figure}
\begin{figure}
    \centering
	\includegraphics[trim = 90.5 5.5 90.5 5.5, clip,width=.7 \textwidth]{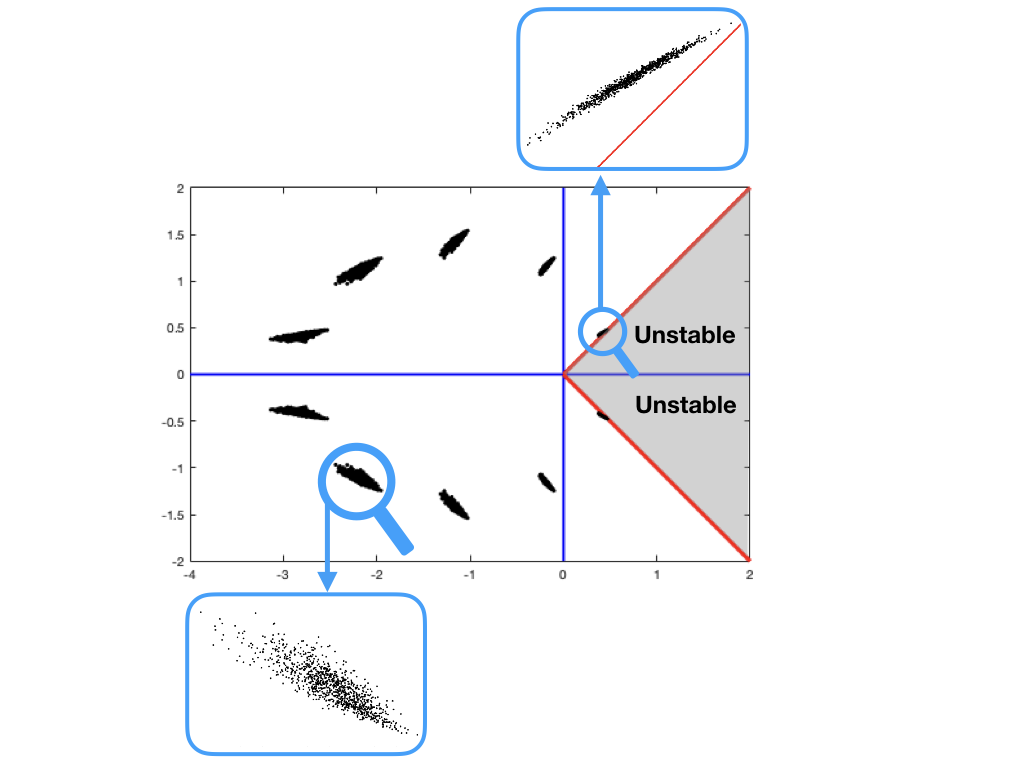}
    \caption{We consider $1000$ randomly generated networks with $10$ nodes, $\alpha=0.5$, $\gamma=\frac{\mathfrak c}{\mathfrak a} =1.5575$, and $\theta=2$ in Example \ref{ex-1}. Black dots show eigenvalues of the state matrices in the complex plane. The gray area shows the unstable area, {\it i.e.},  $|\text{arg}(\lambda)|< \frac{\alpha \pi}{2}$, and red line segments demonstrate $|\text{arg}(\lambda)| = \frac{\alpha \pi}{2}$. {The zoomed-in areas for the clusters are presented by the blue rounded rectangles (magnified four times).}}
    \label{fig:my_label11}
\end{figure}
In this example, for each parameter $\theta \in \{2,1\}$, we consider $1000$ randomly generated networks in the form of given by  \eqref{network} with $n=10$ and $\alpha =0.5$. To do so, we randomly generate $1000$ state matrices with sparsity pattern given by dynamics \eqref{eq6}. Specifically, we randomly generate random vectors of values, $[\log a_1, \cdots, \log a_n]^\top$, each with a fixed sum $0$, and subject to the following restrictions
\[-\theta ~\leq~ \log a_i ~\leq~ \theta,\]
where $\theta \in \{1,2\}$.
Thus, for $a_i$'s we have
\begin{eqnarray}
\prod_{i=1}^{n}a_i ~=~ \exp(0) ~=~ 1,
\label{eq:402}
\end{eqnarray} 
and $\exp(-\theta) \leq a_i \leq \exp(\theta)$.
Similarly, we randomly generate $n$-element vectors of values, $[\log c_1, \cdots, \log c_n]^\top$, each with a fixed sum $n \times \log \gamma $, and subject to the restriction
\[-\theta \leq \log c_i \leq \theta,\]
where $\theta \in \{1,2\}$. 
Then, it follows that
\begin{equation}
    \prod_{i=1}^{n}c_i ~=~ \exp \left(\sum_{i=1}^n \log c_i\right)~=~\exp(n \log \gamma) ~=~ \gamma^n, 
    \label{eq:410}
\end{equation} 
and $\exp(-\theta) \leq c_i \leq \exp(\theta)$. We then assume that $\gamma = 1.5575$.
Using \eqref{eq:402}, \eqref{eq:410}, and \eqref{eq4}, we have
\begin{equation*}
    \gamma ~=~ 1.5575  ~<~ \frac{\sin(\frac{\pi}{4})}{\sin(\frac{\pi}{4}-\frac{\pi}{10})} ~=~ 1.55753651.
\end{equation*}
Therefore, the secant condition \eqref{eq4} holds, and all the systems in this example are stable (cf. Theorem \ref{th-1}). Figures \ref{fig:my_label} and \ref{fig:my_label11} show poles of these $1000$ randomly generated networks with $10$ nodes in the complex plane ({\it i.e.}, eigenvalues of state matrix $A$) for $\theta =2$ and $\theta=1$, respectively. The poles are shown by black dots. These clouds of poles ($10\times1000$ poles) are clustered in several dense regions. In Figures \ref{fig:my_label} and \ref{fig:my_label11}, the zoomed areas for some of the clusters are {presented}. As we expected, based on Theorem \ref{th-main}, all poles are located in stable area.
\end{example}

\vspace{.1cm}
{\begin{remark}
According to Figures \ref{fig:my_label} and \ref{fig:my_label11}, as the interval size for parameters $a_i$'s and $c_i$'s decreases (having less dispersion for $a_i$'s and $c_i$'s), the cloud of poles becomes more regular/symmetrical, and the clusters of poles emerge.
\end{remark}}
\vspace{.1cm}

Finally, we consider a similar setup as Example \ref{ex-1} but with a different value for parameter $\gamma$ to generate unstable systems.

\begin{example}
\label{ex-2}
\begin{figure}
    \centering
	\includegraphics[trim = 90.5 105.5 40.5 125.5, clip,width=.65 \textwidth]{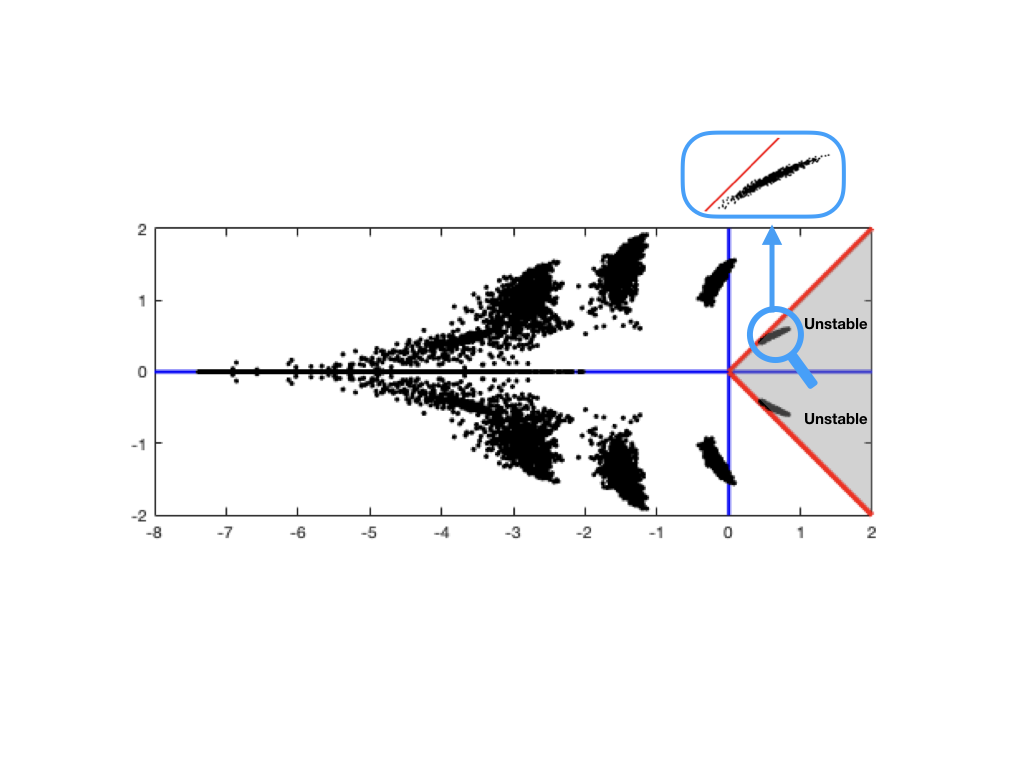}
    \caption{We consider $1000$ randomly generated networks \eqref{network} with $10$ nodes, $\alpha=0.5$ and $\gamma=\frac{\mathfrak c}{\mathfrak a} =2$ in Example \ref{ex-2}. Black dots show eigenvalues of the state matrices in the complex plane. The gray area shows the unstable area, {\it i.e.},  $|\text{arg}(\lambda)|< \frac{\alpha \pi}{2}$, and red line segments demonstrate $|\text{arg}(\lambda)| = \frac{\alpha \pi}{2}$. {The zoomed-in area for the cluster is presented by the blue rounded rectangle (magnified three times). }}
    \label{fig:my_label3}
\end{figure}
In this example, we use the same setup as Example \ref{ex-1} with $\theta =1$ and $\gamma =   2$.
Using \eqref{eq:402}, \eqref{eq:410}, and \eqref{eq4}, we get
\begin{equation}
    \gamma ~=~ 2 ~>~ \frac{\sin(\frac{\pi}{4})}{\sin(\frac{3\pi}{20})}.
\end{equation}
This means that the generalized secant condition \eqref{eq4} in Theorem \ref{th-main} does not hold. As shown in Figure \ref{fig:my_label3}, all the randomly generated systems are unstable. 

Figure \ref{fig:my_label3} shows poles of network \eqref{network} with $n=10$ and $\alpha =0.5$ in the complex plane ({\it i.e.}, eigenvalues of state matrix $A$). The poles are shown by black dots. These clouds of poles ($10\times1000$ poles) are clustered in several dense regions. In Figure \ref{fig:my_label3}, the zoomed-in areas for some of the clusters are shown. As we expected, based on the result of Theorem \ref{th-main}, all poles are not located in the stable area (see Theorem \ref{th-1}).
\end{example}
}

\section{Conclusions and Future Research Directions}

We have presented a stability  criterion for a class of interconnected fractional-order systems, which encompasses the secant criterion for cyclic LTI systems. This result presents a sufficient condition for stability of networks of cyclic interconnection of fractional-order systems when the digraph describing the network conforms to a single circuit. 
{Furthermore, we have investigated the robustness of fractional-order linear networks with normal state matrices. The robustness performance of an FLTI network is quantified using $\mathcal H_2$-norm of the dynamical system, and a closed-form expression based on the spectral zeta function of the underlying graph is presented. We then discuss two important sub-classes of these networks: cyclic networks and consensus networks.}
Finally, the theoretical results are confirmed via some numerical illustrations. 

{For our future research direction, we envision that tools from passivity gains and the secant condition for stability \cite{sontag2006} could be extended to the network of fractional-order systems, and we hope to utilize these results to devise a more accurate model for soft robots. An accurate analytic model of a soft robot will generate fast and reliable data, and therefore will allow us to obtain more practical control and estimation algorithms.}

\begin{spacing}{1.5}
\bibliography{main_Milad}
\end{spacing}
\end{document}